\newtheorem{theorem}{Theorem}[section]
\newtheorem*{theorem*}{Theorem}
\newtheorem{lemma}[theorem]{Lemma}
\numberwithin{equation}{section}
\theoremstyle{definition}
\newcommand{\h}{\mathfrak H}
\newcommand{\Ha}{\mathbb{H}}
\newcommand{\Q}{\mathbb{Q}}
\newcommand{\Z}{\mathbb{Z}}
\newcommand{\hz}{\h^{2}}
\newcommand{\df}{\coloneqq}
\newcommand{\one}{{\bf 1}}
\newcommand{\qsh}{\ast_\diamond}
\newcommand{\QL}{\Q\langle L \rangle}
\newcommand{\mz}{\mathcal Z}
\definecolor{mycolor}{RGB}{194, 8, 88}
\newcommand{\todo}[1]{\message{LaTeX Warning: You did not finish your work :-( on input line \the\inputlineno} {\color{mycolor} {\big[\,}{\bf Todo:} #1\,\big]}}
\newcommand{\kk}{{\bf k}}
\newcommand{\cop}{\DOTSB\cop@\slimits@}
\newcommand{\cop@}{\mathop{\bigstar}}
\newcommand{\cop@@}[2]{%
  \vphantom{\sum}%
  \ifx#1\displaystyle\big#2\else#2\fi
}
\subjclass[2020]{ 
11F11, 
11M32 
}
\keywords{MacMahon's sums-of-divisors, multiple Eisenstein series, (quasi)modular forms}
\title{MacMahon's sums-of-divisors and their connection to multiple Eisenstein series}
\author{Henrik Bachmann}
\address{Graduate School of Mathematics,  Nagoya University, Nagoya, Japan.}
\email{henrik.bachmann@math.nagoya-u.ac.jp}
\begin{document}

\date{\today}

\maketitle

\begin{abstract} 
We give explicit expressions for MacMahon's generalized sums-of-divisors $q$-series $A_r$ and $C_r$ by relating them to (odd) multiple Eisenstein series. Recently, these sums-of-divisors have been studied in the context of quasimodular forms, vertex algebras, $N=4$ $SU(N)$ Super-Yang-Mills theory, and the study of congruences of partitions. We relate them to a broader mathematical framework and give explicit expressions for both $q$-series in terms of Eisenstein series and their odd variants.
\end{abstract}

\section{Introduction}
In this paper, we give expressions for the $q$-series  
\begin{align*}
    A_r(q) &= \sum_{m_1>\dots>m_r>0} \frac{q^{m_1+\dots+m_r}}{(1-q^{m_1})^2\cdots (1-q^{m_r})^2}, \qquad (r\geq 1),
\end{align*}
and its `odd variant'
\begin{align*}
    C_r(q) &= \sum_{\substack{m_1>\dots>m_r>0\\ m_1,\dots,m_r \text{ odd}}}\frac{q^{m_1+\dots+m_r}}{(1-q^{m_1})^2\cdots (1-q^{m_r})^2}, \qquad (r\geq 1)
\end{align*}
in terms of Eisenstein series. These series, whose coefficients can be seen as a generalization of sums-of-divisors, were introduced by MacMahon in \cite{M}. Recently, these $q$-series have emerged in several contexts: they are linked with quasimodular forms (\cite{AR}), they appear in $N=4$ $SU(N)$ Super-Yang-Mills theory as characters of certain vertex algebras (\cite{AKM},\cite{Hu}), and congruences of their coefficients were studied in \cite{AOS}. In addition to the extensions of $A_r$ and $C_r$ we study in this work, further generalizations have been explored in \cite{AAT}. 

In \cite{AR} it was shown that $A_r$ is a quasi-modular form (of mixed weight), that is, $A_r(q) \in \Q[G_2,G_4,G_6]$. This was done by giving a recursive formula for $A_r$ in terms of $G_2 = A_1 - \frac{1}{24}$ and its derivatives. Here, the Eisenstein series of weight $k\geq 2$ are defined by 
\begin{align*}
    G_k(q) &\df  -\frac{B_k}{2k!} + \frac{1}{(k-1)!}\sum_{m,n \geq 1} n^{k-1} q^{mn},
\end{align*}
where $B_k$ denotes the $k$-th Bernoulli number.
As an odd variant, we additionally define
\begin{align*}
    G^{\bf o}_k(q) &\df G_k(q) - G_k(q^2) = \frac{1}{(k-1)!}\sum_{\substack{m,n \geq 1\\m \text{ odd}}} n^{k-1} q^{mn}.
\end{align*}
For $k\geq 4$ the Eisenstein series $G_k$ (resp. $G^{\bf o}_k$) are modular forms for $\operatorname{SL}_2(\Z)$ (resp. $\Gamma_0(2)$).
The main result of this note is the following expressions of the generating series of $A_r$ and $C_r$, which can be used to obtain explicit (non-recursive) expressions of $A_r$ (resp. $C_r$) in terms of Eisenstein series (resp. their odd variants).
\begin{theorem} \label{thm:main}
We have
\begin{align*}
1+ \sum_{r\geq 1} A_r(q)  X^{2r} &=  \frac{2}{X} \arcsin\left( \frac{X}{2} \right) \exp\left( \sum_{j \geq 1} \frac{(-1)^{j-1}}{j} G_{2j}(q)   \left( 2 \arcsin\left( \frac{X}{2} \right)\right)^{2j} \right) \,,\\
1+\sum_{r\geq 1} C_r(q)  X^{2r} &=  \exp\left( \sum_{j \geq 1} \frac{(-1)^{j-1}}{j} G^{\bf o}_{2j}(q)  \left( 2 \arcsin\left( \frac{X}{2} \right)\right)^{2j} \right) \,.
\end{align*}
\end{theorem}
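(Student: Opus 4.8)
The plan is to evaluate both generating series directly as infinite products and then take logarithms. First I would observe that, since the conditions $m_1 > \dots > m_r > 0$ enumerate each $r$-element subset of $\Z_{>0}$ exactly once while the summand depends only on the underlying set, one has $A_r(q) = e_r\!\big(f(1), f(2), \dots\big)$ and $C_r(q) = e_r\!\big(f(1), f(3), f(5), \dots\big)$, where $e_r$ is the $r$-th elementary symmetric polynomial and $f(m) \df q^m/(1-q^m)^2$. Consequently, as identities in $\Q[[q]][[X]]$,
\[
1 + \sum_{r\geq 1} A_r(q) X^{2r} = \prod_{m \geq 1}\Big(1 + X^2 \tfrac{q^m}{(1-q^m)^2}\Big), \qquad 1 + \sum_{r\geq 1} C_r(q) X^{2r} = \prod_{\substack{m\geq 1\\ m \text{ odd}}}\Big(1 + X^2 \tfrac{q^m}{(1-q^m)^2}\Big).
\]

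The crucial algebraic input is a completion of the square. Setting $Y \df 2\arcsin(X/2) = X + O(X^3) \in \Q[[X]]$, one has $X^2 = 2 - 2\cos Y$, so that for any $u$
\[
1 + X^2 \frac{u}{(1-u)^2} = \frac{1 - (2-X^2)u + u^2}{(1-u)^2} = \frac{1 - 2\cos(Y)\,u + u^2}{(1-u)^2}
\]
(the numerator factoring as $(1-e^{iY}u)(1-e^{-iY}u)$). Putting $u = q^m$, taking $\log$, and using the standard expansions $\log(1-x) = -\sum_{k\geq 1} x^k/k$ and $\log(1-2\cos(Y)u+u^2) = -2\sum_{k\geq1}\tfrac{\cos(kY)}{k}u^k$ turns each product into $\sum_{m,k}\tfrac{q^{mk}}{k}\big(2 - 2\cos(kY)\big) = \sum_{j\geq 1}\tfrac{2(-1)^{j-1}}{(2j)!}Y^{2j}\sum_{m,k}k^{2j-1}q^{mk}$, where $m,k$ range over all positive integers in the $A_r$ case and $m$ is restricted to odd values in the $C_r$ case.

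Next I would substitute the definitions of the Eisenstein series. In the unrestricted case $\sum_{m,k\geq 1}k^{2j-1}q^{mk} = (2j-1)!\,G_{2j}(q) + \tfrac{B_{2j}}{4j}$, so, using $\tfrac{2(2j-1)!}{(2j)!} = \tfrac1j$, the double sum becomes
\[
\sum_{j\geq 1}\frac{(-1)^{j-1}}{j}G_{2j}(q)\,Y^{2j} \;+\; \sum_{j\geq 1}\frac{(-1)^{j-1}B_{2j}}{2j\,(2j)!}\,Y^{2j}.
\]
The first term is exactly the exponent appearing in Theorem \ref{thm:main}; for the second, the classical expansion $\log\tfrac{\sin z}{z} = \sum_{j\geq 1}\tfrac{(-1)^{j}2^{2j-1}B_{2j}}{j\,(2j)!}z^{2j}$ evaluated at $z = Y/2$, combined with $2\sin(Y/2) = X$, identifies it with $\log\tfrac{Y/2}{\sin(Y/2)} = \log\big(\tfrac{2}{X}\arcsin(\tfrac X2)\big)$; exponentiating yields the first formula. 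In the odd case the key point is that $\sum_{m\text{ odd},\,k\geq 1}k^{2j-1}q^{mk} = (2j-1)!\,G^{\bf o}_{2j}(q)$ has \emph{no} constant term, so the Bernoulli sum is absent and exponentiation produces the second formula with no $\arcsin$ prefactor.

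I expect the computation to be essentially forced once the product factorization is in hand. The only step demanding genuine care is the bookkeeping of the constant terms contributed by the $G_{2j}$ and recognizing their generating series as $\log\big(\tfrac{2}{X}\arcsin(\tfrac X2)\big)$ via the $\log\tfrac{\sin z}{z}$ expansion — this is precisely what accounts for the asymmetry between the $A_r$ and $C_r$ formulas. The remaining points (the invertible formal substitution $X \leftrightarrow 2\arcsin(X/2)$, convergence of the infinite products, and interchanging orders of summation) are routine in $\Q[[q]][[X]]$.
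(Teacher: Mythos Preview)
Your argument is correct and reaches the result by a genuinely different and more elementary route than the paper. The paper proves Theorem~\ref{thm:main} by passing through the theory of multiple Eisenstein series: it writes $\sum_{l}\mathbb{G}_{\{2\}^l}T^{2l+1}$ as $Z(T)\cdot M(T)$ with $Z(T)=\sin(\pi iT)/(\pi i)$, reduces the multitangent functions $\Psi_{\{2\}^n}$ to $\Psi_2$ via Theorem~\ref{thm:reductionmonotangent} to evaluate $M(T)$, invokes the quasi-shuffle exponential identity \eqref{eq:expG} for $\mathbb{G}_{\{2\}^l}$, and finally inverts the substitution $X=Z(T)$. You instead start directly from the infinite product $\prod_m\bigl(1+X^2 q^m/(1-q^m)^2\bigr)$, complete the square via $2-X^2=2\cos Y$ with $Y=2\arcsin(X/2)$, and take logarithms; the $\arcsin$ prefactor then emerges transparently from the Bernoulli constant terms of the $G_{2j}$ through the expansion of $\log(\sin z/z)$. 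At the core the two computations meet (the paper's $Z(T)^2\Psi_2(m\tau)$ factor is exactly your $X^2 q^m/(1-q^m)^2$ after the change of variables), but your treatment avoids multitangent reduction and the quasi-shuffle machinery entirely. What the paper's approach buys is precisely the structural link advertised in its title: the relation \eqref{geng22} between $\mathbb{G}_{\{2\}^l}$ and $\hat g(\{2\}^l)=(-2\pi i)^{2l}A_l$ exhibits homogeneous-weight quasimodular combinations of the $A_r$ and situates them inside the Fourier theory of multiple Eisenstein series. Your approach, by contrast, gives the shortest self-contained proof of the identities themselves.
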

To prove this theorem, we will relate the $q$-series $A_r$ and $C_r$ to (odd) multiple Eisenstein series and their Fourier expansion. In particular, this will also provide linear combinations of $A_r$, which are quasimodular of homogeneous weight. Multiple Eisenstein series can be written in terms of more general $q$-series defined for $k_1,\dots, k_r \geq 1$ by
\begin{align}\label{eq:defmonog}
g(k_1,\dots,k_r) = \sum_{\substack{m_1 > \dots > m_r > 0\\ n_1, \dots , n_r > 0}} \frac{n_1^{k_1-1}}{(k_1-1)!} \dots \frac{n_r^{k_r-1}}{(k_r-1)!}  q^{m_1 n_1 + \dots + m_r n_r } \,,\\\label{eq:defoddmonog}
g^{\bf o}(k_1,\dots,k_r) = \sum_{\substack{m_1 > \dots > m_r > 0\\ n_1, \dots , n_r > 0\\ m_1,\dots,m_r \text{ odd}}} \frac{n_1^{k_1-1}}{(k_1-1)!} \dots \frac{n_r^{k_r-1}}{(k_r-1)!}  q^{m_1 n_1 + \dots + m_r n_r } \,.
\end{align}
The $g(k)$ be seen as generalizations of the generating series of divisor sums, since in the $r=1$ case these are exactly $G_k$ without the constant term.
Notice that we have
\begin{align*}
    A_r(q) = g(\underbrace{2,\dots,2}_r) =: g(\{2\}^r)\quad \text{ and }\quad 
    C_r(q) = g^{\bf o}(\{2\}^r),
\end{align*}
which follows directly from $\sum_{n>0} n q^{mn} = \frac{q^m}{(1-q^m)^2}$. 
The $q$-series \eqref{eq:defmonog} were introduced in \cite{B2} and studied in more detail in \cite{BK}. The series in \eqref{eq:defoddmonog} are related to the level $2$ series studied in \cite{YZ}.
The $g$ are $q$-analogues of multiple zeta values, since one can show that for $k_1 \geq 2$
\begin{align*}
\lim\limits_{q\rightarrow 1}(1-q)^{k_1+\dots+k_r }g(k_1,\ldots,k_r) = \zeta(k_1,\dots,k_r),
\end{align*}
where for $k_1\geq 2, k_2,\dots,k_r\geq 1$ the multiple zeta values are defined by 
\begin{align*}
	\zeta(k_1,\ldots,k_r) \df \sum_{m_1>\cdots>m_r>0} \frac{1}{m_1^{k_1}\cdots m_r^{k_r}}.
\end{align*}
Similarly, the $g^{\bf o}$ are $q$-analogues of multiple $t$-values introduced in \cite{H2}:
\begin{align*}
\lim\limits_{q\rightarrow 1}(1-q)^{k_1+\dots+k_r }g^{\bf o}(k_1,\ldots,k_r) = t(k_1,\dots,k_r) \df \sum_{\substack{m_1>\cdots>m_r>0\\ m_1,\dots,m_r \text{ odd}}} \frac{1}{m_1^{k_1}\cdots m_r^{k_r}}.
\end{align*}
The $A_r(q)$ are $q$-analogues of $\zeta(2,\dots,2)$, which, as a consequence of \eqref{eq:zt}, can be expressed as
\begin{align}\label{eq:zeta222}
  \lim_{q\rightarrow 1} (1-q)^{2r}A_r(q) =   \zeta(\underbrace{ 2,\dots,2}_r) = \frac{\pi^{2r}}{(2r+1)!}.
\end{align}
Similar expressions also exist for $t(2,\dots,2) \in \Q \pi^{2r}$ (see \cite[Theorem 3.7]{H2}).
It should be mentioned that for any $k\geq 1$ the $q$-series $g(2k,\dots,2k)$ is quasi-modular of mixed weight (see \cite[Corollary 2.24.]{B3}). In particular, we would expect Theorem \ref{thm:main} to be generalized using the same methods as explained below. Our main theorem might be related to \cite[Theorem 1.4]{AOS} and should provide a compact way for the explicit formulas in \cite{Hu}.

\subsection*{Acknowledgements}\mbox{}\\
This project was partially supported by JSPS KAKENHI Grant 23K03030. The author would like to thank Sven M\"oller for making him aware of the appearances of $q$-analogues of multiple zeta values in the context of vertex algebras and the work \cite{AKM}.

\section{Multiple Eisenstein series}
In this section, we recall basic facts about multiple zeta values, multiple Eisenstein series, and the calculation of their Fourier expansion. Details can be found in \cite{B1},\cite{B2},\cite{B3}, and \cite{BT}. Further, we introduce the odd multiple Eisenstein series as natural odd counterparts of the classical multiple Eisenstein series.
Following \cite{GKZ}, we define for $ k_1,\dots,k_r \geq 2$ and $\tau \in \Ha$ the \emph{multiple Eisenstein series}\footnote{In the case $k_1=2$ one needs to use Eisenstein summation.} by
\begin{align}\label{eq:defmes}
\mathbb{G}_{k_1,\dots,k_r}(\tau) := \sum_{\substack{\lambda_1 \succ \dots \succ \lambda_r \succ 0\\ \lambda_i \in \Z \tau + \Z}} \frac{1}{\lambda_1^{k_1} \dots \lambda_r^{k_r}}   \,,
\end{align}
where the order $\succ$ on the lattice $\Z \tau + \Z$ is defined by $m_1 \tau + n_1 \succ m_2 \tau + n_2$ iff $m_1 > m_2$ or $m_1 = m_2 \wedge n_1 > n_2$. Since $\mathbb{G}_{k_1,\dots,k_r}(\tau + 1) = \mathbb{G}_{k_1,\dots,k_r}(\tau)$ the multiple Eisenstein series possess a Fourier expansion, i.e., an expansion in $q=e^{2\pi i \tau}$, which was calculated in \cite{GKZ} for the $r=2$ case and for arbitrary depth by the author in \cite{B2}. In depth one, we have for $k\geq 2$
\begin{align*} \mathbb{G}_k(\tau) = \sum_{\substack{\lambda \in \Z \tau + \Z\\  \lambda \succ 0}} \frac{1}{\lambda^k }  = \sum_{ \substack{ m > 0 \\ \vee \, (   m=0 \wedge n>0) }} \frac{1}{(m\tau +n)^k }  =  \zeta(k)+  \sum_{m>0} \underbrace{\sum_{n\in \Z}\frac{1}{(m\tau +n)^k}}_{{\large =: \Psi_{k}(m\tau) } }\,.
\end{align*}
Notice that these are, up to a power of $2\pi i$, the Eisenstein series in the introduction and we have $\mathbb{G}_k(\tau) = (2\pi i)^k G_k(q)$.
Here, the $\Psi_{k}(\tau)$ are sometimes referred to as \emph{ monotangent function} (\cite{Bo}). By the Lipschitz formula 
 \begin{align}\label{eq:defmonotangent}
    \Psi_k(\tau) = \sum_{n \in \Z} \frac{1}{(\tau+n)^k} = \frac{(-2\pi i)^{k}}{(k-1)!} \sum_{d>0} d^{k-1} q^d  \,, \qquad (q=e^{2 \pi i \tau})
\end{align}
 we obtain
\begin{align*}
\mathbb{G}_k(\tau)  &= \zeta(k) + \sum_{m>0}  \Psi_k(m\tau) =  \zeta(k) + \frac{(-2\pi i)^{k}}{(k-1)!}\sum_{\substack{m>0\\ d >0}} d^{k-1} q^{m d} = \zeta(k) + (-2\pi i)^k g(k) \,.
\end{align*} 
Similarly, we can define the \emph{odd multiple Eisenstein series} for $ k_1,\dots,k_r \geq 2$ and $\tau \in \Ha$ by
\begin{align*}
\mathbb{G}^{\bf o}_{k_1,\dots,k_r}(\tau) := \sum_{\substack{\lambda_1 \succ \dots \succ \lambda_r \succ 0\\ \lambda_i \in \Z^{\bf o} \tau + \Z}} \frac{1}{\lambda_1^{k_1} \dots \lambda_r^{k_r}}  \,,
\end{align*}
where the order $\succ$ is the same as before and $\Z^{\bf o} \df \{ 2m+1 \mid m\in \Z \}$ denotes the set of odd integers. This definition is new, but in the $r=2$ case they are linear combinations of the level $2$ double Eisenstein series introduced in \cite{KT}.

We get similar results as above for the depth $r=1$ case, except that we do not have a constant term, since the case $m=0$ does not appear.
In the Fourier expansion of (multiple) Eisenstein series, the $q$-series $g$ always appear together with a power of $-2\pi i$, and therefore we set for $k_1,\dots,k_r \geq 1$
\begin{align*}
     \hat{g}(k_1,\dots,k_r;\tau) =   \hat{g}(k_1,\dots,k_r) \df (-2\pi i)^{k_1+\dots + k_r} g(k_1,\dots,k_r) \,.
\end{align*}
and similarly define $\hat{g}^{\bf o}$.
With this, the higher depths analogue of $\mathbb{G}_k(\tau) = \zeta(k) + \hat{g}(k)$ and $\mathbb{G}^{\bf o}_k(\tau) = \hat{g}^{\bf o}(k)$ is given as follows.

\begin{theorem}\label{thm:mesfourier}
 For $k_1,\dots,k_r \geq 2$ and $q=e^{2\pi i \tau}$ we have the following expressions for multiple Eisenstein series and their odd variants.
\begin{enumerate}[(i)]
\item There exist integers $\alpha^{k_1,\dots,k_r}_{l_1,\dots,l_r,j} \in \Z$ with
    \begin{align*}
    \mathbb{G}_{k_1,\dots,k_r}(\tau) = \zeta(k_1,\dots,k_r) +\!\!\!\!\!\!\!\!\!\!\sum_{\substack{0 < j < r\\l_1+\dots+l_r = k_1+\dots+k_r\\l_1\geq 2,l_2,\dots,l_r\geq 1}} \!\!\!\!\!\!\!\!\!\! \alpha^{k_1,\dots,k_r}_{l_1,\dots,l_r,j}\, \zeta(l_1,\dots,l_j)  \hat{g}(l_{j+1},\dots,l_r) +  \hat{g}(k_1,\dots,k_r)\,.
\end{align*}
\item  There exist integers $\beta^{k_1,\dots,k_r}_{l_1,\dots,l_r,j} \in \Z$ with \begin{align*}
    \mathbb{G}^{\bf o}_{k_1,\dots,k_r}(\tau) = \sum_{\substack{0 < j < r\\l_1+\dots+l_r = k_1+\dots+k_r\\l_1\geq 2,l_2,\dots,l_r\geq 1}} \!\!\!\!\!\!\!\!\!\! \beta^{k_1,\dots,k_r}_{l_1,\dots,l_r,j}\, \zeta(l_1,\dots,l_j)  \hat{g}^{\bf o}(l_{j+1},\dots,l_r) +  \hat{g}^{\bf o}(k_1,\dots,k_r)\,.
\end{align*}
\end{enumerate}
\end{theorem}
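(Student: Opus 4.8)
\medskip
\noindent\emph{Sketch of proof.}
Part (i) is the Fourier expansion of the classical multiple Eisenstein series established in \cite{B2} (and, for $r=2$, in \cite{GKZ}); I would prove (ii) by running the identical argument with the extra requirement that all $\tau$-coefficients be odd, and I outline that argument here so that both parts are covered at once. The scheme has three steps: split off the lattice points lying on the real line, evaluate the resulting ``positive part'' by a partial fraction computation, and reassemble. The base cases $r=1$ are the identities $\mathbb{G}_k(\tau)=\zeta(k)+\hat{g}(k)$ and $\mathbb{G}^{\bf o}_k(\tau)=\hat{g}^{\bf o}(k)$ recorded above.

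Write $\lambda_i=m_i\tau+n_i$. Since $\lambda_1\succ\dots\succ\lambda_r\succ 0$ forces $m_1\ge m_2\ge\dots\ge m_r\ge 0$, the indices with $m_i=0$ form a terminal block $\{j+1,\dots,r\}$, and summing that block over $n_{j+1}>\dots>n_r>0$ produces the factor $\zeta(k_{j+1},\dots,k_r)$; hence
\[
\mathbb{G}_{k_1,\dots,k_r}(\tau)=\sum_{j=0}^{r}\mathbb{G}^{>0}_{k_1,\dots,k_j}(\tau)\,\zeta(k_{j+1},\dots,k_r),
\]
where $\mathbb{G}^{>0}_{k_1,\dots,k_j}$ denotes the analogous sum restricted to $m_i\ge 1$ (read via Eisenstein summation when $k_1=2$). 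In the odd case $m_i=0$ never occurs, so this step is vacuous and $\mathbb{G}^{\bf o}_{k_1,\dots,k_r}$ equals its own positive part.

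For the positive part I would decompose the region $m_1\ge\dots\ge m_j\ge 1$ according to the equality pattern of the $m_i$, i.e.\ into maximal consecutive blocks of indices sharing a common value of $m$. For a block $\{i,\dots,i+s\}$ with common value $m$, fixing $m$ and the consecutive gaps $a_u=n_{i+u-1}-n_{i+u}>0$ $(1\le u\le s)$ leaves a sum over the single free variable $n_{i+s}\in\Z$ of the product $\prod_t(m\tau+n_{i+t})^{-k_{i+t}}$; by partial fractions together with the Lipschitz formula \eqref{eq:defmonotangent} this equals a $\Z$-linear combination of monotangent values $\Psi_\ell(m\tau)$ whose coefficients are rational functions of the $a_u$, and summing those coefficients over $a_1,\dots,a_s>0$ produces multiple zeta values. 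Summing the surviving $\Psi_\ell(m\tau)$ over $m\ge 1$ (resp.\ over odd $m\ge 1$), respecting the strict order of the distinct block-values, turns each monotangent factor into a $\hat{g}$- (resp.\ $\hat{g}^{\bf o}$-) value. The key point for (ii) is that the gap variables $a_u$ range over \emph{all} positive integers regardless of the parity of $m$; this is why its coefficients involve ordinary multiple zeta values rather than odd ones.

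Finally one reassembles: a term indexed by a choice of terminal real block together with a block decomposition of the positive part is, a priori, a product of several multiple zeta values (one from the real block, one from each non-singleton positive block) times a single $\hat{g}$- (resp.\ $\hat{g}^{\bf o}$-) value, and re-expressing that product of multiple zeta values as a $\Z$-linear combination of single multiple zeta values by means of the shuffle product --- which preserves weight, preserves the total depth (so that exactly $r$ arguments remain in each summand), and preserves admissibility of the leading entry, and whose structure constants are integers --- and collecting like terms yields the asserted formula. The isolated term $\zeta(k_1,\dots,k_r)$ comes from the case that all $\lambda_i$ are real, impossible in (ii), which explains its absence there; the isolated term $\hat{g}(k_1,\dots,k_r)$ (resp.\ $\hat{g}^{\bf o}(k_1,\dots,k_r)$) comes from the case $m_1>\dots>m_r>0$; every remaining contribution carries both a nontrivial multiple zeta value and a nontrivial $\hat{g}$- (resp.\ $\hat{g}^{\bf o}$-) factor, i.e.\ lies in the range $0<j<r$. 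I expect the real obstacle to be not this combinatorics but the fact that for $k_1=2$ the lattice sums are only conditionally convergent and must be read through Eisenstein summation (with the delicate point concentrated in the weight-one monotangent terms): one has to verify that splitting off the real block, passing to the block decomposition, and the partial fraction manipulation are each compatible with that summation convention. This analytic bookkeeping is the technical heart, and would be carried out as in \cite{B2} and \cite{BT}.
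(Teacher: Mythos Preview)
Your sketch is correct and follows essentially the same route as the paper: split off the terminal real block to obtain $\mathbb{G}_{k_1,\dots,k_r}=\sum_j \hat{g}^*(k_1,\dots,k_j)\,\zeta(k_{j+1},\dots,k_r)$, decompose the positive part $\hat{g}^*$ according to the equality pattern of the $m_i$, and reduce each block by partial fractions together with the Lipschitz formula. The only difference is packaging: the paper names the block sums \emph{multitangent functions} $\Psi_{k_1,\dots,k_s}(m\tau)$ and cites Bouillot's reduction theorem (Theorem~\ref{thm:reductionmonotangent}) for the partial-fraction step and the vanishing of the $\Psi_1$-terms, whereas you describe that computation directly; you are in turn more explicit than the paper about using the shuffle product to recombine the resulting products of multiple zeta values into single ones of the correct depth, and about the Eisenstein-summation caveat when $k_1=2$.
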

Part (i) of Theorem \ref{thm:mesfourier} was first proven in \cite{GKZ} for $r=1,2$ and in \cite{B2} for general $r\geq 1$. We recall the proof in the following and show that (ii) follows with exactly the same argument. First, observe that for $k_1,\dots,k_r \geq 2$ we have, by the Lipschitz formula \eqref{eq:defmonotangent}, that the $q$-series $\hat{g}$ can be written as an ordered sum over monotangent functions
\begin{align}\label{eq:ghatclassical}
    \hat{g}(k_1,\dots,k_r) = \sum_{m_1 > \dots > m_r > 0} \Psi_{k_1}(m_1 \tau) \cdots \Psi_{k_r}(m_r \tau) \,.
\end{align}
The same expression is true for $\hat{g}^{\bf}$ by taking the sum over odd $m_j$. In what follows, we will restrict ourselves mostly to the classical case, since the odd case follows with the same argument by restricting all ordered sums over $m_j$ to the odd cases.
In general, multiple Eisenstein series can be written as ordered sums over \emph{multitangent functions} (\cite{Bo}), which are for $k_1,\dots,k_r \geq 2$ and $\tau \in \Ha$ defined by
\begin{align*}
    \Psi_{k_1,\ldots,k_r}(\tau) := \sum_{\substack{n_1>\cdots >n_r \\n_i \in \Z}} \frac{1}{(\tau+n_1)^{k_1}\cdots (\tau+n_r)^{k_r}}.
\end{align*}
These functions were originally introduced by Ecalle and then in detail studied by Bouillot in \cite{Bo}. To write $\mathbb{G}_{k_1,\dots,k_r}(\tau)$ in terms of these functions, one splits up the summation in the definition \eqref{eq:defmes} into $2^r$ parts, corresponding to the different cases where either $m_i = m_{i+1}$ or $m_i > m_{i+1}$ for $\lambda_i = m_i \tau + n_i$ and $i=1,\dots,{r}$ ($\lambda_{r+1}=0$). Then one can check that the multiple Eisenstein series can be written as 
\begin{align}\label{eq:classicalmesasmouldproduct}
    \mathbb{G}_{k_1,\dots,k_r}(\tau) = \sum_{j=0}^r \hat{g}^*(k_1,\dots,k_j) \zeta(k_{j+1},\dots,k_r)\,,
\end{align} 
where the $q$-series $\hat{g}^*$ are given as ordered sums over multitangent functions by
\begin{align}\label{eq:gastclassical}
    \hat{g}^*(k_1,\dots,k_r) := \sum_{\substack{1 \leq j \leq r\\0 = r_0< r_1 < \dots < r_{j-1} < r_j = r\\ m_1 > \dots > m_j > 0}}  \prod_{i=1}^j \Psi_{k_{r_{i-1}+1},\ldots,k_{r_i}}(m_i \tau)\,.
\end{align}
Further, one can show (\cite[Construction 6.7]{B1}) that the $q$-series $\hat{g}^*$ satisfy the harmonic product formula, e.g. $\hat{g}^*(k_1) \hat{g}^*(k_2) = \hat{g}^*(k_1,k_2) + \hat{g}^*(k_2,k_1) + \hat{g}^*(k_1+k_2)$. To obtain the statement in Theorem \ref{thm:mesfourier}, one then uses the following theorem.

\begin{theorem}{\cite[Theorem 6]{Bo}}\label{thm:reductionmonotangent}
For $k_1,\dots,k_r \geq 2$ with $k=k_1+\dots+k_r$ the multitangent function can be written as 
\begin{align*}
    \Psi_{k_1,\dots,k_r}(\tau) = \sum_{\substack{1\leq j \leq r\\ l_1+\dots+l_r = k}} (-1)^{l_1+\dots+l_{j-1}+k_j+k} \prod_{\substack{1\leq i \leq r\\i\neq j}}\binom{l_i-1}{k_i-1} \zeta(l_1,\dots,l_{j-1}) \,\Psi_{l_j}(\tau)\, \zeta(l_r,l_{r-1},\dots,l_{j+1})\ .
\end{align*} 
Moreover, the terms with $\Psi_{1}(\tau)$ vanish. 
\end{theorem}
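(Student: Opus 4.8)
The plan is to derive the identity from the multivariate partial fraction decomposition of the rational function $\tau \mapsto \prod_{i=1}^{r}(\tau+n_i)^{-k_i}$, whose only singularities are poles of order $k_i$ at $\tau=-n_i$ (the $n_i$ being pairwise distinct under the constraint $n_1>\dots>n_r$). First I would write
\begin{align*}
\prod_{i=1}^{r}\frac{1}{(\tau+n_i)^{k_i}}
= \sum_{j=1}^{r}\sum_{l_j=1}^{k_j}\frac{c_{j,l_j}(\mathbf n)}{(\tau+n_j)^{l_j}},
\qquad
c_{j,l_j}(\mathbf n)=\frac{1}{(k_j-l_j)!}\,\partial_\tau^{\,k_j-l_j}\!\Bigl(\prod_{i\neq j}(\tau+n_i)^{-k_i}\Bigr)\Bigr|_{\tau=-n_j},
\end{align*}
and expand the derivative with the Leibniz rule. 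Setting $l_i=k_i+a_i$, where $a_i\ge 0$ is the order to which $\partial_\tau$ hits the $i$-th factor (so $\sum_{i\neq j}a_i=k_j-l_j$, hence $l_1+\dots+l_r=k$), the factorials produced by the Leibniz rule collapse into binomial coefficients and one obtains
\begin{align*}
c_{j,l_j}(\mathbf n)=\sum_{\substack{l_i\ge k_i\ (i\neq j)\\ l_1+\dots+l_r=k}}\ \prod_{i\neq j}\binom{l_i-1}{k_i-1}\,(-1)^{l_i-k_i}\,(n_i-n_j)^{-l_i}.
\end{align*}
This step is purely algebraic.

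Next I would substitute this into $\Psi_{k_1,\dots,k_r}(\tau)=\sum_{n_1>\dots>n_r}\prod_i(\tau+n_i)^{-k_i}$ and reindex block by block. Fixing $j$ and a tuple $(l_i)_{i\neq j}$, put $p=n_j$, $a_i=n_i-n_j$ for $i<j$ (so that $a_1>\dots>a_{j-1}>0$) and $b_i=n_j-n_i$ for $i>j$ (so that $0<b_{j+1}<\dots<b_r$); these three blocks of variables are mutually independent. Using $(n_i-n_j)^{-l_i}=(-1)^{l_i}b_i^{-l_i}$ for $i>j$, the inner sum factors completely:
\begin{align*}
\sum_{p\in\Z}\frac{1}{(\tau+p)^{l_j}}\ \cdot\ \sum_{a_1>\dots>a_{j-1}>0}\ \prod_{i<j}\frac{1}{a_i^{l_i}}\ \cdot\ \sum_{0<b_{j+1}<\dots<b_r}\ \prod_{i>j}\frac{1}{b_i^{l_i}}
=\Psi_{l_j}(\tau)\,\zeta(l_1,\dots,l_{j-1})\,\zeta(l_r,l_{r-1},\dots,l_{j+1}).
\end{align*}
Collecting the sign $(-1)^{l_i-k_i}$ over $i\neq j$ together with the extra $(-1)^{l_i}$ coming from the $i>j$ factors, and simplifying modulo $2$ via $\sum l_i=k$, reduces the overall sign to $(-1)^{l_1+\dots+l_{j-1}+k_j+k}$, which is exactly the claimed formula. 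Note that $\binom{l_i-1}{k_i-1}\neq 0$ forces $l_i\ge k_i\ge 2$ for every $i\neq j$, so each multiple zeta value that appears is convergent; the only possibly-divergent object is $\Psi_{l_j}(\tau)$ when $l_j=1$.

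The main obstacle is precisely this last point: the partial fraction decomposition genuinely produces terms with $l_j=1$, for which $\Psi_1(\tau)=\sum_{n\in\Z}(\tau+n)^{-1}$ is only conditionally convergent, so the term-by-term reindexing above is not legitimate a priori. What rescues it — and what the ``moreover'' clause asserts — is that the \emph{total} coefficient of $\Psi_1(\tau)$ vanishes. The mechanism: $c_{j,1}(\mathbf n)$ is the residue of $\prod_i(\tau+n_i)^{-k_i}$ at $\tau=-n_j$, and since that rational function is $O(\tau^{-k})$ with $k\ge 4$ at infinity, $\sum_{j=1}^{r}c_{j,1}(\mathbf n)=0$ for each fixed $\mathbf n$. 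Working inside the symmetric partial sums $\sum_{|n_i|\le N}$, where every rearrangement is finite and hence legal, one uses this to rewrite the combined $l_j=1$ contribution as an absolutely convergent sum that vanishes in the limit $N\to\infty$; in depth $r=2$ this cancellation reduces to the symmetry $\binom{k_1+k_2-2}{k_2-1}=\binom{k_1+k_2-2}{k_1-1}$, and in higher depth it is a genuine combinatorial identity provable by induction on $r$ or directly from the residue argument. I would treat the binomial and sign bookkeeping of the first two steps as routine and concentrate the care on this removal of the $\Psi_1$-terms.
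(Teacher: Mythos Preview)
Your derivation via partial fractions and reindexing is exactly the paper's: it writes down the same partial fraction identity for $\prod_i(\tau+n_i)^{-k_i}$ (the Leibniz--rule computation you spell out) and then sums over $n_1>\cdots>n_r$. The only divergence is in how the ``moreover'' clause is argued. The paper says one ``uses the shuffle product formula for multiple zeta values and the so-called antipode relation'': it treats the total coefficient of $\Psi_1(\tau)$ in the displayed sum as an explicit $\Q$-linear combination of products $\zeta(l_1,\dots,l_{j-1})\,\zeta(l_r,\dots,l_{j+1})$ and kills it as an identity among multiple zeta values. Your residue identity $\sum_j c_{j,1}(\mathbf n)=0$ is correct and is the right tool for showing that the $l_j=1$ block is absolutely summable after a subtraction trick, but it does not by itself make that block vanish: for fixed $\mathbf n$ the fractions $c_{j,1}(\mathbf n)/(\tau+n_j)$ carry different denominators, so the residue cancellation does not annihilate $\sum_j c_{j,1}(\mathbf n)/(\tau+n_j)$ pointwise, and after your change of variables the various $j$'s occupy different regions of the truncation, so one cannot simply pass to the limit term by term. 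What ultimately must be checked is the combinatorial identity you yourself flag at the end as ``provable by induction on $r$'' --- and that identity is precisely what the paper names shuffle-plus-antipode. So your plan is sound and essentially coincides with the paper's; the residue framing is a legitimate alternative packaging of the same cancellation rather than a way to bypass the MZV step.
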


\begin{proof}
This follows by using partial fraction decomposition 
\begin{align*}
  \frac{1}{(\tau+n_1)^{k_1}\cdots (\tau+n_r)^{k_r}} = \sum_{\substack{1\leq j \leq r\\ l_1+\dots+l_r = k}} \prod_{i=1}^{j-1} \frac{(-1)^{l_i}\binom{l_i-1}{k_i-1}}{(n_i-n_j)^{l_j}} \frac{(-1)^{k+k_j}}{(\tau+n_j)^{l_j}} \prod_{i=l+1}^r \frac{\binom{l_i-1}{k_i-1}}{(n_j-n_i)^{l_j}}\,.
\end{align*}
In order to show that the terms with  $\Psi_{1}(\tau)$ vanish, one uses the shuffle product formula for multiple zeta values and the so-called antipode relation.
\end{proof}
Applying Theorem \ref{thm:reductionmonotangent} to \eqref{eq:gastclassical}, we see by \eqref{eq:ghatclassical} that the $\hat{g}^*$ can be written as a $\mz$-linear combination of $\hat{g}$. This proves Theorem \ref{thm:mesfourier} (i) since one can also show that all the appearing multiple zeta values have the correct depth. With the same argument, also the odd case follows. Notice, however, that there are no multiple zeta values in the odd variant of \eqref{eq:classicalmesasmouldproduct}, since the $m_j$ can not be $0$. Instead, we directy get 
\begin{align}\label{eq:gastodd}
    \mathbb{G}^{\bf o}(k_1,\dots,k_r) := \sum_{\substack{1 \leq j \leq r\\0 = r_0< r_1 < \dots < r_{j-1} < r_j = r\\ m_1 > \dots > m_j > 0\\ m_1,\dots,m_j \text{odd}}}  \prod_{i=1}^j \Psi_{k_{r_{i-1}+1},\ldots,k_{r_i}}(m_i \tau)\,.
\end{align}
Theorem \ref{thm:mesfourier} (ii) then follows as before from Theorem \ref{thm:reductionmonotangent} and the odd variant of \eqref{eq:ghatclassical}.

\section{Quasi-shuffle products}
In this section, we recall some basic facts on quasi-shuffle products as, for example, presented in \cite{HI}. Let $L$ be a countable set, called \emph{alphabet}, whose elements we will refer to as \emph{letters}. A monic monomial in the non-commutative polynomial ring $\QL$ will be called a \emph{word}, and we denote the empty word by $\one$. 
Suppose that we have a commutative and associative product $\diamond$ on the vector space $\Q L$. Then the \emph{quasi-shuffle product}  $\qsh$ on $\QL$ is defined as the $\Q$-bilinear product, which satisfies $\one \qsh w = w \qsh \one = w$ for any word $w\in \QL$ and
\begin{align*}
	a w \qsh b v = a (w \qsh b v) + b (a w \qsh v) + (a \diamond b) (w \qsh  v) 
\end{align*}
for any letters $a,b \in L$ and words $w, v \in \QL$. This gives a commutative $\Q$-algebra $(\QL, \qsh)$, which is called quasi-shuffle algebra. 

In the context of multiple zeta values and multiple Eisenstein series, we are interested in the alphabet $L_z=\{z_k \mid k\geq 1\}$ together with the product $z_{k_1} \diamond z_{k_2} = z_{k_1+k_2}$ for $k_1,k_2\geq 1$. The corresponding quasi-shuffle product $\ast=\qsh$ is called the \emph{harmonic product}. 

The space $\h^1 :=\Q\langle L_z\rangle$ equipped with the harmonic product gives a commutative $\Q$-algebra $\h^1_\ast$. For an index $\kk = (k_1,\dots,k_r) \in \Z_{\geq 1}^r$ we define the word $z_\kk = z_{k_1} \cdots z_{k_r}$. Notice that, as a $\Q$-vector space, $\h^1$ is spanned by $z_\kk$ for arbitrary indices $\kk$. We define the subspace $\h^0$ as the space spanned by $z_\kk$ for admissible (meaning $k_1\geq 2$) indices $\kk$. We also consider the following subspace of $\h^0$
\begin{align*}
    \h^{2} = \Q+\langle k_1,\dots,k_r \mid r\geq 1, k_1,\dots, k_r \geq 2\rangle_\Q\,,
\end{align*}
which is spanned by $z_\kk$ such that the multiple Eisenstein series $\mathbb{G}_{\kk}$ is defined. Notice that both $\h^0$ and $\h^2$ are closed under $\ast$. We obtain the following inclusion of $\Q$-algebras
\begin{align*}
	    \hz_\ast 	\subset    \h^0_\ast &\subset \h^1_\ast \,.
\end{align*}
We can view the multiple zeta values as a $\Q$-linear map defined for $w \in \h^0$ by
\begin{align*}
    \zeta: \h^0 &\longrightarrow \mz\,,\\
   w= z_\kk &\longmapsto \zeta(w)=\zeta(\kk)\,.
\end{align*} 
By the definition of multiple zeta values as an iterated sum, it is easy to see that this map is an algebra homomorphism $\h^0_\ast\rightarrow \mz$. Similarly, one can show that the $\Q$-linear maps defined by 
\begin{minipage}{.5\linewidth}
\begin{align*}
    \mathbb{G}: \h^2 &\longrightarrow \mathcal{O}(\Ha)\,,\\
   z_\kk &\longmapsto \mathbb{G}_\kk\,.
\end{align*} 
\end{minipage}%
\begin{minipage}{.5\linewidth}
\begin{align*}
    \mathbb{G}^{{\bf o}}: \h^2 &\longrightarrow \mathcal{O}(\Ha)\,,\\
   z_\kk &\longmapsto \mathbb{G}^{{\bf o}}_\kk\,.
\end{align*} 
\end{minipage}
are also algebra homomorphism. 

For Theorem \ref{thm:main} we need the following fact for quasi-shuffle algebras: Assume that we have an algebra homomorphism $\varphi: (\QL, \qsh) \rightarrow R$ in some $\Q$-algebra $R$. Then for any $a \in L$  the following identity holds (see \cite[(2.13)]{B3} or \cite[Corollary 5.1.]{HI}) 
\begin{align*}
1+ \sum_{n=1}^{\infty} \varphi(a^n) T^n = \exp\left( \sum_{n=1}^{\infty} (-1)^{n-1} \varphi( a^{\diamond n}) \frac{T^n}{n} \right). 
\end{align*}
Here $a^{\diamond n} = a \diamond \cdots \diamond a$ and $a^n = a\cdots a$. In particular, by choosing $a=z_k$ and the harmonic product, we get for any $k\geq 2$
\begin{align}\label{eq:expG}
1+ \sum_{n=1}^{\infty} \mathbb{G}_{\underbrace{k,\dots,k}_n}(\tau) T^n = \exp\left( \sum_{n=1}^{\infty} (-1)^{n-1} \mathbb{G}_{nk}(\tau) \frac{T^n}{n} \right).
\end{align}
This shows that for any $n,k \geq1$ the multiple Eisenstein series $\mathbb{G}_{2k,\dots,2k}(\tau)$ is a (quasi)modular form of homogeneous weight $2nk$. Similarly, \eqref{eq:expG} holds for $\mathbb{G}^{{\bf o}}$ and we see that $\mathbb{G}^{{\bf o}}_{2k,\dots,2k}(\tau)$ is a polynomial in $\mathbb{G}^{{\bf o}}_{2l}$ with $1 \leq l \leq n$. 

\section{Multiple Eisenstein series \texorpdfstring{$\mathbb{G}_{2,\dots,2}$}{G2..2}, \texorpdfstring{$\mathbb{G}^{\bf o}_{2,\dots,2}$}{Go2..2} and proof of Theorem \ref{thm:main}}

Writing down \eqref{eq:classicalmesasmouldproduct} and \eqref{eq:gastclassical} for the special case $k_1=\dots=k_r=2$ gives
\begin{align*}
\mathbb{G}_{2}(\tau) &= \zeta(2) + \sum_{m>0} \Psi_2(m\tau) = \zeta(2) +  \hat{g}(2),\\
\mathbb{G}_{2,2}(\tau) &:= \zeta(2,2) + \zeta(2) \cdot \sum_{m>0} \Psi_2(m\tau) + \sum_{m>0} \Psi_{2,2}(m\tau) + \sum_{m_1 > m_2 > 0} \Psi_2(m_1 \tau) \Psi_2(m_2 \tau), \\
\mathbb{G}_{2,2,2}(\tau) &= \zeta(2,2,2) + \zeta(2,2) \cdot \sum_{m>0} \Psi_2(m\tau) + \zeta(2) \cdot  \sum_{m>0} \Psi_{2,2}(m\tau)  + \sum_{m>0} \Psi_{2,2,2}(m\tau)\\
&+\zeta(2) \sum_{m_1 > m_2 > 0} \Psi_2(m_1 \tau) \Psi_2(m_2 \tau) + \sum_{m_1 > m_2 >0} \Psi_{2,2}(m_1\tau) \Psi_2(m_2\tau)\\
&+ \sum_{m_1 > m_2 >0} \Psi_2(m_1\tau)\Psi_{2,2}(m_2\tau) + \sum_{m_1 > m_2 >m_3>0} \Psi_2(m_1\tau) \Psi_2(m_2\tau) \Psi_2(m_3\tau) \,.
\end{align*}
In general, we see that the generating series of $    \mathbb{G}_{2,\dots,2}$ can be written as
\begin{align*} 
 \sum_{l\geq 0}     \mathbb{G}_{\{2\}^l} T^{2l+1} &=\underbrace{\left(T + \zeta(2)T^3 + \zeta(2,2)T^5+ \dots \right) }_{=:\, Z(T)} \cdot \underbrace{\prod_{m>0} \left(1 + \Psi_2(m\tau) T^2 + \Psi_{2,2}(m\tau) T^4+ \dots \right) }_{=:\, M(T)},
\end{align*}
where we set $\mathbb{G}_{\{2\}^0}:=1$. From the definition of $\zeta(2,\dots,2)$ one can see that the function $Z(T)$ is given by the product formula of the sine function
\begin{align}\label{eq:zt}
 Z(T) = \sum_{j=0}^\infty \zeta( \{ 2 \}^j) T^{2j+1} = T \prod_{m=1}^\infty \left( 1 + \frac{T^2}{m^2} \right) = \frac{\sin\left( \pi i T\right)}{\pi i } \,. 
\end{align}
To calculate $M(T)$, we need the following:
\begin{lemma}
For $n\geq 1$, we have
\[ \Psi_{\{2\}^n}(x) = \frac{\pi^{2n-2} \cdot 2^{2n-1}}{(2n)!} \Psi_2(x) \,.\]
\end{lemma}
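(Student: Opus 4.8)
The plan is to reduce the multitangent function $\Psi_{\{2\}^n}$ to a scalar multiple of the monotangent function $\Psi_2$ using Bouillot's reduction formula (Theorem~\ref{thm:reductionmonotangent}), and then to evaluate the scalar in closed form with the help of $\zeta(\{2\}^m) = \frac{\pi^{2m}}{(2m+1)!}$ from \eqref{eq:zt}.

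First I would apply Theorem~\ref{thm:reductionmonotangent} with $k_1 = \dots = k_n = 2$, so that $k = 2n$ and the depth is $r = n$. For $i \neq j$ each binomial factor is $\binom{l_i - 1}{k_i - 1} = \binom{l_i - 1}{1} = l_i - 1$, which vanishes unless $l_i \geq 2$; and by the final assertion of Theorem~\ref{thm:reductionmonotangent} the terms with $\Psi_1(\tau)$ contribute nothing and may be dropped, so that we may also assume $l_j \geq 2$. Since then $l_1 + \dots + l_n = 2n$ is a sum of $n$ integers each $\geq 2$, necessarily $l_i = 2$ for all $i$. With all $l_i = 2$ every binomial equals $1$ and the sign $(-1)^{l_1 + \dots + l_{j-1} + k_j + k} = (-1)^{2(j-1) + 2 + 2n}$ equals $1$, so the formula collapses to
\begin{align*}
\Psi_{\{2\}^n}(x) = \Bigl(\, \sum_{j=1}^{n} \zeta(\{2\}^{j-1})\,\zeta(\{2\}^{n-j}) \Bigr)\, \Psi_2(x), \qquad \zeta(\{2\}^0) := 1.
\end{align*}

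It then remains to identify the constant. Substituting $\zeta(\{2\}^m) = \frac{\pi^{2m}}{(2m+1)!}$ turns the bracket into $\pi^{2n-2} \sum_{j=1}^{n} \frac{1}{(2j-1)!\,(2n-2j+1)!}$; writing $a = 2j-1$, which runs through the odd integers in $\{1, \dots, 2n-1\}$, this last sum equals $\frac{1}{(2n)!} \sum_{a \text{ odd}} \binom{2n}{a} = \frac{2^{2n-1}}{(2n)!}$, by the elementary identity coming from $(1+1)^{2n} - (1-1)^{2n} = 2\sum_{a \text{ odd}} \binom{2n}{a}$. This yields the claimed factor $\frac{\pi^{2n-2}\,2^{2n-1}}{(2n)!}$. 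The only genuinely delicate step is the first one — specifically the justification that all terms with $l_j = 1$ may be discarded simultaneously (they do occur individually, for instance when one of the remaining $l_i$ equals $3$), which is exactly what the last line of Theorem~\ref{thm:reductionmonotangent} supplies; the rest is bookkeeping and the binomial computation. As a sanity check, $n=1$ gives $\Psi_2 = \Psi_2$ and $n=2$ gives $\Psi_{2,2}(x) = \tfrac{\pi^2}{3}\,\Psi_2(x) = 2\zeta(2)\,\Psi_2(x)$.
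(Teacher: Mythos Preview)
Your proof is correct and follows essentially the same route as the paper's: apply Theorem~\ref{thm:reductionmonotangent} to reduce $\Psi_{\{2\}^n}$ to $\sum_{j=1}^n \zeta(\{2\}^{j-1})\zeta(\{2\}^{n-j})\,\Psi_2$, then evaluate the constant via $\zeta(\{2\}^m)=\pi^{2m}/(2m+1)!$ and the odd-binomial identity. You spell out more carefully than the paper why only the tuple $(l_1,\dots,l_n)=(2,\dots,2)$ survives in the reduction formula and why the sign is $+1$, and you supply a proof of $\sum_{e}\frac{1}{(2e-1)!(2n-2e+1)!}=\frac{2^{2n-1}}{(2n)!}$, which the paper simply quotes; but the argument is the same.
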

\begin{proof} By Theorem \ref{thm:reductionmonotangent} we have
\begin{align*}
\Psi_{2,\dots,2}(x) &= \sum_{e=1}^n \zeta(\underbrace{2,\dots,2}_{e-1}) \zeta(\underbrace{2,\dots,2}_{n-e}) \Psi_2(x) =  \pi^{2n-2} \sum_{e=1}^n \frac{1}{(2e-1)!} \frac{1}{(2n-2e+1)!} \Psi_2(x),
\end{align*}
where in the second equation we used \eqref{eq:zeta222}. The statement then follows from 
\[ \sum_{e=1}^n \frac{1}{(2e-1)!} \frac{1}{(2n-2e+1)!}   = \frac{2^{2n-1}}{(2n)!}. \]
\end{proof}
Using this we get for $M(T)$
\begin{align*}
M(T) &= \prod_{m>0} \left( \sum_{j=0}^\infty \Psi_{\{2\}^j}(m\tau) T^{2j}  \right)=  \prod_{m>0} \left( 1 + \sum_{j=1}^\infty \frac{\pi^{2j-2} \cdot 2^{2j-1}}{(2j)!} T^{2j} \Psi_2(m \tau)  \right) \\
&=  \prod_{m>0} \left( 1 + \frac{1}{2\pi^2} \left( \cos(2\pi i T) -1 \right) \Psi_2(m \tau)  \right) = \prod_{m>0} \left( 1 - \frac{\sin(\pi i T)^2}{\pi^2} \Psi_2(m \tau)  \right) \\
&=  \prod_{m>0} \left( 1 + Z(T)^2 \Psi_2(m \tau)  \right),
\end{align*}
where, we used $\cos(x) - 1 = - 2\sin\left( \frac{x}{2} \right)^2$ in the fourth equation. As we have seen before, the multiple divisor functions can be written as a sum over products of monotangents functions, and we have
\[ \hat{g}(\{2\}^l) = \sum_{m_1 > \dots > m_n>0} \Psi_2(m_1\tau) \cdots \Psi_2(m_l \tau).\]
With this, we can write the generating series of $\mathbb{G}_{\{2\}^l}$ as
\begin{equation} \label{geng22}
\sum_{l\geq 0} \mathbb{G}_{\{2\}^l} T^{2l+1} = Z(T)  \prod_{m>0} \left( 1 + Z(T)^2 \Psi_2(m \tau)  \right)  = \sum_{l\geq 0}  \hat{g}(\{2\}^l) Z(T)^{2l+1}.
\end{equation}
On the other hand, by \eqref{eq:expG} we have
\[ \sum_{l\geq 0} \mathbb{G}_{\{2\}^l} T^{2l+1} = T\exp\left( \sum_{k>0} \frac{(-1)^{k-1}}{k} \mathbb{G}_{2k} T^{2k} \right) \,. \]
Theorem \ref{thm:main} (i) follows by setting $X=Z(T)$ and dividing out an appropriate power of $2\pi i$. Given that \(\mathbb{G}_{\{2\}^l}\) is a quasimodular form of homogeneous weight \(2l\), equation \eqref{geng22} can be interpreted as a methodology for combining the \(q\)-series \(g(2,\dots,2) = A_r\) to construct a quasimodular form of homogeneous weight. To get the formula in Theorem \ref{thm:main} (ii), we proceed in a similar way, by setting $k_1=\dots=k_r=2$ in \eqref{eq:gastodd}. This leads to
\begin{align*} 
 \sum_{l\geq 0}     \mathbb{G}^{\bf o}_{\{2\}^l} T^{2l} &=\prod_{\substack{m>0\\ m \text{ odd}}} \left(1 + \Psi_2(m\tau) T^2 + \Psi_{2,2}(m\tau) T^4+ \dots \right)= \sum_{l\geq 0}  \hat{g}^{\bf o}(\{2\}^l) Z(T)^{2l},
\end{align*}
which then directly gives the formula in Theorem \ref{thm:main} (ii) after dividing out some powers of $2\pi i$ and by using 
\[ \sum_{l\geq 0} \mathbb{G}^{\bf o}_{\{2\}^l} T^{2l} = \exp\left( \sum_{k>0} \frac{(-1)^{k-1}}{k} \mathbb{G}^{\bf o}_{2k} T^{2k} \right) \,. \]


\end{document}